\newcommand{\w}{\omega}
\newcommand{\1}{{\bf 1}}
\DeclareMathOperator{\Com}{Com}
\DeclareMathOperator{\Aut}{Aut\,}
\newcommand{\affine}[2]{{\mathcal L}(#1,#2)}
\newcommand{\inn}{{\rm I}}
\newcommand\Z{\mathbb{Z}}
\newcommand\Zpos{\Z_{\geq0}}
\newcommand\Zplus{\Z_{>0}}
\newcommand\C{\mathbb{C}}
\newcommand\h{\mathfrak{h}}
\newcommand\splin{\mathfrak{sl}}
\newcommand{\NO}{\,{\raise0.25em\hbox{$\mathop{\hphantom {\cdot}}\limits^{_{\circ}}_{^{\circ}}$}}\,}
\newtheorem{theorem}{Theorem}[section]
\newtheorem{proposition}[theorem]{Proposition}
\newtheorem{lemma}[theorem]{Lemma}
\newtheorem{corollary}[theorem]{Corollary}
\theoremstyle{definition}
\theoremstyle{remark}
\numberwithin{equation}{section}
\begin{document}

\begin{center}
\begin{large}
Commutant of $\mathcal{L}_{\widehat{\splin}_2}(4,0)$ in the cyclic permutation orbifold of  
$\mathcal{L}_{\widehat{\splin}_2}(1,0)^{\otimes 4}$
\end{large}
\end{center}

\begin{center}
Toshiyuki Abe\footnote{The first author was partially supported by JSPS Grant-in-Aid for Young Scientists (B) No. 23740022.} 
\\
Faculty of Education, 
Ehime University\\
Matsuyama, Ehime 790-8577, Japan
\\ and 
\\
Hiromichi Yamada\footnote{The second author was partially supported by JSPS Grant-in-Aid for
Scientific Research No. 23540009.}\\
Department of Mathematics, Hitotsubashi University\\
Kunitachi, Tokyo 186-8601, Japan
\end{center}

\begin{center}
{\bf Abstract}
\end{center}
We study the commutant of the vertex operator algebra 
$\mathcal{L}_{\widehat{{\splin}}_2}(4,0)$ in the cyclic permutation orbifold model 
$(\mathcal{L}_{\widehat{{\splin}}_2}(1,0)^{\otimes 4})^\tau$ 
with $\tau=(1\,2\,3\,4)$.  
It is shown that the commutant is isomorphic to a $\Z_2\times\Z_2$-orbifold model 
of a tensor product of two lattice type vertex operator algebras of rank one.  

%\tableofcontents
\section{Introduction}\label{Sect1} 
Let $A_1 = \Z\alpha$, $\langle \alpha, \alpha \rangle = 2$ be a root lattice of 
type $A_1$. 
It is well-known that the vertex operator algebra $V_{A_1}$ 
associated to the lattice $A_1$ is isomorphic to a simple affine vertex operator algebra 
$\mathcal{L}_{\widehat{\splin}_2}(1,0)$ of type $\splin_2$ with level $1$. 
For an integer $k \ge 2$, the cyclic sums of the weight one vectors in 
$V_{A_1}$ in the tensor product $V_{A_1}^{\otimes k}$ 
of $k$ copies of $V_{A_1}$ {generate} a vertex operator subalgebra isomorphic to 
$\mathcal{L}_{\widehat{\splin}_2}(k,0)$. 
The commutant $M$ of $\mathcal{L}_{\widehat{\splin}_2}(k,0)$ in 
$V_{A_1}^{\otimes k}$ 
%$\mathcal{L}_{\widehat{\splin}_2}(1,0)^{\otimes k}$
 has been studied well 
(see for example \cite{JiangLin}, \cite{LamSakuma}, \cite{LamYamada}). 
Among other things the classification of irreducible modules for $M$ 
and the rationality of $M$ were established in \cite{JiangLin}. 

Let $\tau$ be a cyclic permutation on the tensor components of $V_{A_1}^{\otimes k}$ of 
length $k$. Then $\tau$ is an automorphism of the vertex operator algebra $V_{A_1}^{\otimes k}$ 
and 
every element of $\mathcal{L}_{\widehat{\splin}_2}(k,0)$ is fixed by $\tau$. 
Thus $\tau$ induces an automorphism of $M$. 
Our main concern is the orbifold model $M^\tau$ of $M$ by $\tau$, that is, 
the set of fixed points of $\tau$ in $M$, 
which is the commutant of $\mathcal{L}_{\widehat{\splin}_2}(k,0)$ in 
the orbifold model $(V_{A_1}^{\otimes k})^\tau$.

The vertex operator algebra $\mathcal{L}_{\widehat{\splin}_2}(k,0)$ contains a subalgebra 
$T$ isomorphic to a vertex operator algebra associated to a rank one lattice 
generated by a square norm $2k$ element, which corresponds to a Cartan subalgebra 
of $\splin_2(\C)$. 
The commutant of $T$ in $V_{A_1}^{\otimes k}$ is a lattice type vertex operator algebra 
$V_{\sqrt{2}A_{k-1}}$. On the other hand
the commutant $K(\splin_2,k)$ of $T$ in $\mathcal{L}_{\widehat{\splin}_2}(k,0)$ is called 
a parafermion vertex operator algebra of type $\splin_2$. 
The parafermion vertex operator algebra  $K(\splin_2,k)$ has been studied both 
in mathematics and in physics from various points of view 
(see for example \cite{ArakawaLamYamada}, \cite{DongLamYamada09}, 
\cite{DongLamWangYamada10}, \cite{DongLepowsky93}). 
We note that $M \otimes \mathcal{L}_{\widehat{\splin}_2}(k,0) \subset V_{A_1}^{\otimes k}$ and 
$M \otimes K(\splin_2,k) \subset V_{\sqrt{2}A_{k-1}}$. 
In fact, $M$ is the commutant of $K(\splin_2,k)$ in $V_{\sqrt{2}A_{k-1}}$. 

If $k = 2$, then $M$ is isomorphic to the simple Virasoro vertex operator algebra 
$L(\frac{1}{2},0)$ of central charge $\frac{1}{2}$. 
In this case $\tau$ acts trivially on $M$ and $M^\tau$ coincides with $M$. 
The first nontrivial case, that is, the orbifold model $M^\tau$ for the case $k=3$ 
was studied in \cite{DongLamTanabeYamadaYokoyama}. 
It was shown that $M^\tau$ is a $W_3$-algebra of central charge $\frac{6}{5}$. 
Furthermore, the classification of irreducible modules for $M^\tau$ was obtained 
and their properties were discussed in detail. 
Those results were used for the study of the vertex operator 
algebra $(V_{\sqrt{2}A_2})^\tau$ in \cite{TanabeYamada}.

In this paper we consider the orbifold model $M^\tau$ for the case $k=4$. 
The study of $M^\tau$ should lead to a better understanding of the 
structure of $(V_{\sqrt{2}A_3})^\tau$, for $M^\tau \otimes K(\splin_2,4)$ is 
contained in $(V_{\sqrt{2}A_3})^\tau$. 

Let $L = \Z\alpha \oplus \Z\alpha \oplus \Z\alpha$ be an orthogonal sum of three copies 
of $\Z\alpha$, where $\langle \alpha,\alpha \rangle = 2$. 
The main idea is the use of  an automorphism $\rho$ of the vertex operator algebra 
$V_L$ studied in \cite{DongLamYamada01}, 
which maps $V_N$ onto $V_L^+$. 
Here $N$ is a sublattice of $L$ isomorphic to the sublattice $\sqrt{2}A_3$ of $A_1^4$. 
It was shown in \cite{DongLamYamada01} that $\rho(M) = \Com_{V_L^+}(V_{\Z\gamma})$; 
the commutant of $V_{\Z\gamma}$ in $V_L^+$, 
where $\gamma = (\alpha,\alpha,\alpha) \in L$. 

The cyclic permutation $\tau$ on the tensor components of 
$V_{A_1}^{\otimes 4} = V_{A_1^4}$ is a lift of an isometry of the underlying lattice $A_1^4$. 
We denote the isometry of $A_1^4$ by the same symbol $\tau$. 
The sublattice $\sqrt{2}A_3$ of $A_1^4$ is invariant under $\tau$. 
Hence we can discuss an isometry $\widetilde{\tau}$ of $N$ corresponding to 
the isometry $\tau$ of $\sqrt{2}A_3$ by the isomorphism $N \cong \sqrt{2}A_3$. 
We extend $\widetilde{\tau}$ to an isometry of $L$ and consider its lift to an automorphism 
of the vertex operator algebra $V_L$. We denote the automorphism by the same symbol 
$\widetilde{\tau}$. 
Let $\tau' = \rho \widetilde{\tau} \rho^{-1}$ be the conjugate of $\widetilde{\tau}$ 
by $\rho$ so that $\rho(M^\tau) = \Com_{V_L^+}(V_{\Z\gamma})^{\tau'}$. 
It turns out that $\Com_{V_L^+}(V_{\Z\gamma})^{\tau'}$ can be expressed as 
$(V_{\Z\gamma_1} \otimes V_{\Z\gamma_2})^G$, where $\gamma_1$ and $\gamma_2$ 
are elements of $L$ of square norm $12$ and $4$, respectively and $G$ is a group 
of automorphisms of $V_{\Z\gamma_1} \otimes V_{\Z\gamma_2}$ isomorphic to 
$\Z_2\times\Z_2$. 

It is known that the vertex operator algebra $M$ is generated by the 
set of conformal vectors $\omega_\alpha$ of central charge $\frac{1}{2}$ associated 
to the positive roots $\alpha$ of type $A_3$ (see \cite{JiangLin}, \cite{LamSakuma}). 
However, it is difficult to describe the properties of the orbifold model $M^\tau$ 
in terms of those generators of $M$. 
By the result in this paper we can discuss $(V_{\Z\gamma_1} \otimes V_{\Z\gamma_2})^G$ 
instead of $M^\tau$, which seems to be easy to treat.

This paper is organized as follows. 
In Section 2 we review basic materials of vertex operator algebras such as 
conformal vectors and the commutant of a vertex operator subalgebra. 
In Section 3 we discuss two kinds of automorphisms of a vertex operator algebra 
$V_L$ associated to a positive definite even lattice $L$, one is a lift of 
the $-1$-isometry of the lattice $L$ and the other is an exponential of the 
operator $h_{(0)}$ for $h \in \C \otimes_{\Z} L$.  
We also recall three automorphisms of a rank one lattice type vertex operator algebra 
$V_{\Z\alpha}$ studied in \cite{DongLamYamada99}, \cite{DongLamYamada01} 
for $\langle \alpha,\alpha \rangle = 2$. 
In Section 4 we introduce the commutant $M$ of $\mathcal{L}_{\widehat{{\splin}}_2}(4,0)$ 
in $\mathcal{L}_{\widehat{{\splin}}_2}(1,0)^{\otimes 4}$ and its orbifold model $M^\tau$ by $\tau$. 
Finally, in Section 5 we prove that $M^\tau$ is isomorphic to a $\Z_2\times\Z_2$-orbifold model 
of a tensor product of two rank one lattice type vertex operator algebras. 

The automorphism $\rho$ of the vertex operator algebra $V_L$ plays a key role in our argument. 
The use of $\rho$ was suggested by Ching Hung Lam. 
The authors are grateful to him for the important advice.

\section{Preliminaries}\label{Sect2}
In this section we review some basic notions and notations for vertex operator algebras 
(see \cite{MatsuoNagatomo99}, \cite{Kac98}, \cite{LepowskyLi04}).
Let $V = (V, Y, \1, \omega)$ be a vertex operator algebra with the vacuum vector $\1$ 
and the Virasoro vector $\omega$. We denote $\omega$ by $\omega^V$ also.
The $n$-th product of $u,v\in V$ will be written as $u_{(n)}v$ for $n\in\Z$. 
We often regard $u_{(n)}$ as a $\C$-linear endomorphism of $V$. 
Two vectors $u$ and $v$ in $V$ are said to be {\it mutually commutative} 
if $u_{(n)}v=0$ for all $n \in \Zpos$. 
The eigenspace $V_n$ for $L_0=\w^V_{(1)}$ of eigenvalue $n\in\Z$ is finite dimensional. 
A vector in $V_n$ is said to be of weight $n$.

A vertex operator subalgebra of $V$ is a vertex subalgebra $U$ equipped with 
a Virasoro vector $\w^U$. 
When $\w^V=\w^U$, $U$ is said to be full. 
For a pair of a vertex operator algebra $V$ and its subalgebra $U$, the subspace  
\begin{align*}
{\Com}_V(U)=\{v\in V\,|\,u_{(n)}v=0\text{ for }n\in\Zpos\text{ and }u\in U\} 
\end{align*} 
becomes a vertex operator algebra with Virasoro vector $\w^{{\Com}_V(U)}=\w^V-\w^U$. 
We call it the {\it commutant} of $U$ in $V$.  
Actually, it is known that 
\begin{align}\label{conf101}
{\Com}_V(U)=\{v \in V|\w^U_{(0)}v=0\}
\end{align}
(see \cite[Theorem 5.2]{FrenkelZhu92}). 
Hence the commutant of $U$ in $V$ depends only on the Virasoro vector of $U$. 

%Here we will recall the notion of a conformal vector. 
A vector $e\in V_2$ is called a {\it conformal vector} if $L^e_{n}=e_{(n+1)}$, $n\in\Z$ 
give a representation for the Virasoro algebra on $V$ of certain central charge. 
The Virasoro vector of a vertex operator subalgebra $U$ of $V$ is a conformal vector of $V$. 
Let $e$ be a conformal vector in $V$. 
For any vertex operator subalgebra $U$ with $\w^U=e$, 
the commutant $\Com_V(U)$ does not depend on $U$ by \eqref{conf101}. 
In such a case we may write $\Com_V(e)=\Com_V(U)$.  
%As one of consequences of this fact, we have the following proposition.

\begin{proposition}\label{prop102}
Let $V$ be a vertex operator algebra and $e^1, e^2$ mutually commutative conformal vectors in $V$.
Then $\Com_V(e^1+e^2)=\Com_{\Com_V(e^1)}(e^2)$. 
\end{proposition}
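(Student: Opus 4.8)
The plan is to use the characterization \eqref{conf101} to rewrite both sides of the claimed identity as explicit kernels of operators on $V$, and then to compare those kernels. Two preliminary observations are needed. First, since $e^1$ and $e^2$ are mutually commutative we have $e^1_{(n)}e^2=0$ for all $n\in\Zpos$, so the commutator formula yields $[e^1_{(m)},e^2_{(n)}]=0$ for all $m,n\in\Z$; in particular the operators $L^{e^1}_n=e^1_{(n+1)}$ and $L^{e^2}_n=e^2_{(n+1)}$ commute, and hence $(e^1+e^2)_{(n+1)}=L^{e^1}_n+L^{e^2}_n$ satisfies the Virasoro relations. Thus $e^1+e^2$ is a conformal vector. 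Second, $e^1_{(n)}e^2=0$ for $n\in\Zpos$ says precisely that $e^2\in\Com_V(e^1)$; since $\Com_V(e^1)$ is a vertex operator subalgebra of $V$ containing $e^2$ it is closed under the operators $e^2_{(n)}$, and because the Virasoro relations for the $e^2_{(n)}$ already hold in $V$ they continue to hold in $\Com_V(e^1)$, so $e^2$ is a conformal vector of $\Com_V(e^1)$. Hence both $\Com_V(e^1+e^2)$ and $\Com_{\Com_V(e^1)}(e^2)$ make sense.

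Next I would invoke \eqref{conf101} three times. In $V$ it gives $\Com_V(e^1)=\{v\in V\mid e^1_{(0)}v=0\}$ and $\Com_V(e^1+e^2)=\{v\in V\mid (e^1_{(0)}+e^2_{(0)})v=0\}$; applied inside the vertex operator algebra $\Com_V(e^1)$ it gives $\Com_{\Com_V(e^1)}(e^2)=\{v\in\Com_V(e^1)\mid e^2_{(0)}v=0\}=\{v\in V\mid e^1_{(0)}v=0 \text{ and } e^2_{(0)}v=0\}$. Thus the proposition is equivalent to the assertion that $(e^1_{(0)}+e^2_{(0)})v=0$ implies $e^1_{(0)}v=e^2_{(0)}v=0$; the converse is trivial. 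Note that the analogue of this statement for an arbitrary pair of commuting operators is false, so the conformal structure really must enter.

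For the nontrivial direction, let $v\in V$ with $(e^1_{(0)}+e^2_{(0)})v=0$, i.e. $v\in\Com_V(e^1+e^2)$. Since $e^2_{(0)}$ commutes with $e^1_{(0)}$ (again because $e^2_{(0)}e^1=0$), it commutes with $e^1_{(0)}+e^2_{(0)}$, so $e^2_{(0)}v$ again lies in $\Com_V(e^1+e^2)$; by \eqref{conf101} this forces $(e^1+e^2)_{(n)}(e^2_{(0)}v)=0$ for every $n\in\Zpos$, in particular $(e^1+e^2)_{(1)}(e^2_{(0)}v)=0$. On the other hand $(e^1+e^2)_{(1)}v=0$ for the same reason, and the commutator formula gives $[(e^1+e^2)_{(1)},e^2_{(0)}]=[e^1_{(1)},e^2_{(0)}]+[e^2_{(1)},e^2_{(0)}]=0+e^2_{(0)}$; therefore $(e^1+e^2)_{(1)}(e^2_{(0)}v)=e^2_{(0)}v$. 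Comparing the two evaluations forces $e^2_{(0)}v=0$, and then $e^1_{(0)}v=-e^2_{(0)}v=0$ as well. I expect this final comparison — exhibiting $e^2_{(0)}v$ simultaneously as an element of the commutant $\Com_V(e^1+e^2)$ and as a $(e^1+e^2)_{(1)}$-eigenvector of eigenvalue $1$ — to be the only subtle step; everything else is bookkeeping with \eqref{conf101} and the commutator formula.
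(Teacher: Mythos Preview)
Your argument is correct. The route, however, differs from the paper's. The paper forms the vertex subalgebra $U$ generated by $e^1$ and $e^2$, notes that $e^1+e^2$ is its Virasoro vector, and then uses the \emph{original} definition of the commutant (vanishing of all nonnegative products) together with the fact that $\Com_V(U)$ is already determined by the generators $e^1,e^2$ to obtain
\[
\Com_V(e^1+e^2)=\Com_V(U)=\{v\in V\mid e^1_{(n)}v=e^2_{(n)}v=0\text{ for }n\in\Zpos\}=\Com_{\Com_V(e^1)}(e^2)
\]
in one line. You instead pass through \eqref{conf101} to reduce everything to the kernels of $e^1_{(0)}$, $e^2_{(0)}$ and their sum, and then separate the kernel of the sum by a commutator trick using the Virasoro relation $[L_0^{e^2},L_{-1}^{e^2}]=L_{-1}^{e^2}$. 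The paper's approach is shorter and makes the equality manifest at the level of all $n\in\Zpos$ at once; your approach is more explicit about exactly where the conformal structure enters (namely the single relation $[(e^1+e^2)_{(1)},e^2_{(0)}]=e^2_{(0)}$), and it sidesteps the implicit step in the paper's proof that $\Com_V(U)$ is determined by the two generators.
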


\begin{proof}
Let $U$ be a vertex subalgebra generated by $e^1$ and $e^2$. 
Since $e^1$ and $e^2$ are mutually commutative, 
$e_1+e_2$ is the Virasoro vector of $U$. 
Thus we have 
\begin{align*}
\Com_V(e^1+e^2)&=\Com_V(U)\\
&= \{v \in V|e^1_{(n)}v = e^2_{(n)}v = 0\text{ for }n\in\Zpos\}\\
&=\{v \in \Com_{V}(e^1)|e^2_{(n)}v = 0\text{ for }n\in\Zpos\}\\
&=\Com_{\Com_{V}(e^1)}(e^2). 
\end{align*} 
\end{proof} 

An automorphism of a vertex operator algebra $V$ is a linear isomorphism of $V$ 
preserving all $n$-th product, and fixing the vacuum vector $\1$ and the Virasoro vector $\w^V$. 
For a group $G$ consisting of automorphisms of $V$, the subset 
\begin{align*}
V^G=\{v\in V\,|\,g(v)=v\text{ for } g\in G\}
\end{align*}
is a full vertex operator subalgebra of $V$, which is called the orbifold model of $V$ by $G$. 
When $G=\langle \tau\rangle$ is a cyclic group, we denote $V^G$ by $V^\tau$ simply.  

Let $V$ be a vertex operator algebra and $G$ an automorphism group of $V$. 
Let $U$ a vertex operator subalgebra of $V$ and assume that $g(\w^U)=\w^U$ for any $g\in G$. 
Then the restriction $g'$ of $g\in G$ to $\Com_V(U)$ gives rise to an automorphism of $\Com_V(U)$.
In fact, for the automorphism group $H=\{g'|g\in G\}$ of $\Com_V(U)$, we have  
\begin{align}\label{orbcoset}
\Com_V(U)^H=\Com_{V^G}(\w^U). 
\end{align}

\section{Lattice type vertex operator algebras and their automorphisms}\label{Sect3}
In this section we discuss certain automorphisms of lattice type vertex operator algebras. 
Let  $V_L$ be the vertex operator algebra constructed in \cite{FLM} 
for a positive definite even lattice $(L, \langle \,,\, \rangle)$ of rank $d$. 
As a vector space $V_L$ is isomorphic to a tensor product of the symmetric algebra 
$S(\h\otimes t^{-1}\C[t^{-1}] )$ and the twisted group algebra $\C\{L\}$ of $L$, 
where $\h=\C\otimes_{\Z}L$. 
In this paper we only consider the case where $\langle \alpha, \alpha \rangle \in 4\Z$ 
for any $\alpha \in L$ or $L$ is an orthogonal sum of rank one lattices. 
In such a case the central extension $\hat{L}$ of $L$ studied in \cite{FLM} splits 
and $\C\{L\}$ is canonically isomorphic to the ordinary group algebra $\C[L]$. 
Thus we take $\C[L]$ in place of $\C\{L\}$ here. 
A standard basis of $\C[L]$ is denoted by 
$\{ e^\alpha | \alpha \in L\}$ with multiplication $e^\alpha e^\beta = e^{\alpha + \beta}$. 
The vacuum vector of $V_L$ is $\1=1\otimes e^0$, and the Virasoro vector is given by 
\begin{equation*}
\w^{V_L}=\frac{1}{2}\sum_{i=1}^{d}(h_i\otimes t^{-1})^2\otimes e^0,
\end{equation*}
where $\{h_1,\cdots,h_{d}\}$ is an orthonormal basis of $\h$. 
Every eigenvalue for $L_0=\w^{V_L}_{(1)}$ on $V_L$ is a nonnegative integer and 
the eigenspace $(V_L)_n$ with eigenvalue $n$ is finite dimensional. 

\begin{comment}
For a positive definite even lattice $L$ of rank $d$, 
we have a lattice type vertex operator algebra $V_L$ (see \cite{FLM}, \cite{Dong93}). 
As vector spaces $V_L$ is isomorophic to a tensor product of the symmetric algebra 
$S(\h\otimes t^{-1}\C[t^{-1}] )$ and the group algebra $\C[L]=\bigoplus_{\alpha\in L}\C e^\alpha$, 
where $\h=\C\otimes_{\Z}L$ is a vector space equipped with bilinear form 
$\langle-,-\rangle$ over $\C$ which is naturally extended from that of $L$.
The vacuum vector of $V_L$ is $\1=1\otimes e^0$, and the Virasoro vector is given by 
\begin{equation*}
\w^{V_L}=\frac{1}{2}\sum_{i=1}^{d}(h^i\otimes t^{-1})^2\otimes e^0  
\end{equation*}
by means of an orthonormal basis $\{h_1,\cdots,h_{d}\}$ of $\h$. 
Every eigenvalue for $L_0=\w^{V_L}_{(1)}$ is a nonnegative integer and 
the eigenspace $(V_L)_n$ of weight $n$ is finite dimensional. 
\end{comment}

\begin{comment}
We see that the vertex operator algebra $V_L$ is a direct sum of vector subspaces 
$M(1,\alpha)=S(\h\otimes t^{-1}\C[t^{-1}] )\otimes e^{\alpha}$ with $\alpha\in L$, 
where $\h=\C\otimes_{\Z}L$ equipped with bilinear form $\langle-,-\rangle$ over $\C$ 
which is naturally extended from that of $L$.
We see that $M(1)=M(1,0)=S(\h\otimes t^{-1}\C[t^{-1}] )\otimes e^{0}$ is a full 
vertex operator subalgebra of $V_L$ called the free bosonic vertex operator algebra.   
\end{comment}

For simplicity, we regard the sets $L$, $\h$ and $\{e^\alpha |\alpha\in L\}$ 
as subsets of $V_L$, respectively, under the identification
\begin{align*}
\alpha=(\alpha\otimes 1)_{(-1)}\1, \quad h=h_{(-1)}\1,\quad e^\alpha=1\otimes e^\alpha
\end{align*}
for $\alpha\in L$ and $h\in \h$. 
Then we have $(V_L)_0=\C\1$ and 
\begin{align*}
(V_L)_1=\h\oplus\bigoplus_{\stackrel{\alpha\in L}{\langle\alpha,\alpha\rangle=2}}\C e^{\alpha}. 
\end{align*}  
In fact, the weight of $e^\alpha$ is $\frac{1}{2}\langle\alpha,\alpha\rangle$ for $\alpha\in L$.

We will need two kinds of automorphisms of $V_L$.  
One is an involution $\theta_L$ given by a lift of the $-1$-isometry of the lattice $L$. 
We have 
\begin{equation*}
\theta_L(\beta)=-\beta,\quad \theta_L(e^{\beta})=e^{-\beta}
\end{equation*}
for $\beta\in L$. 
The set $(V_L)^{\theta_L}$ of fixed points of $\theta_L$ is also denoted by $V_L^+$. 
The other is an inner automorphism $\inn_{h}=\exp(2\pi\sqrt{-1}h_{(0)})$ for $h\in\h$. 
We have 
\begin{equation*}
\inn_{h}(\beta)=\beta,\quad \inn_{h}(e^{\beta})=e^{2\pi\sqrt{-1}\langle h,\beta\rangle}e^{\beta}  
\end{equation*}
for $\beta\in L$. 
In particular, 
\begin{equation*}
\inn_{\alpha/2\langle\alpha,\alpha\rangle}(e^{m\alpha})=(-1)^me^{m\alpha}
\end{equation*}
for $m\in\Z$ and $0 \ne \alpha\in L$. 
Since $\inn_{h_1} \inn_{h_2} = \inn_{h_1+h_2}$ for $h_1, h_2 \in \h$, the automorphism 
$\inn_{h}$ is of finite order if and only if $h\in \frac{1}{T}L$ for some $T\in\Zplus$. 
By the definition of $\theta_L$ and $\inn_h$, we see that
\begin{equation*}
\theta_L\inn_h\theta_L=\inn_{-h}
\end{equation*}
for any $h\in\h$. 
Therefore, 
\begin{equation}\label{conj102}
\inn_{-\frac{h}{2}}(\inn_h\theta_L)\inn_{\frac{h}{2}}=\theta_L. 
\end{equation} 
That is, $\inn_h\theta_L$ is conjugate to $\theta_L$ in $\Aut(V_L)$.

Let $\Z\alpha$ be a rank one lattice with $\langle\alpha,\alpha\rangle=2$, 
which is the root lattice of type $A_1$. 
In \cite[Section 2]{DongLamYamada99}, 
three involutions $\theta_1$, $\theta_2$ and $\sigma$ 
of $V_{\Z\alpha}$ are considered. 
The involutions $\theta_1$ and $\theta_2$ are expressed as
\begin{equation*}
\theta_1=\inn_{\frac{\alpha}{4}},\quad  
\theta_{2}=\theta_{\Z\alpha},
\end{equation*}
and $\sigma$ is a unique extension of the involution of the Lie algebra 
$(V_{\Z\alpha})_1\cong {\splin}_2(\C)$ given by  
\begin{equation}\label{sigmainvolution}
\sigma(\alpha)=E^\alpha,\quad \sigma(E^\alpha)=\alpha, \quad \sigma(F^\alpha)=-F^\alpha,
\end{equation}
where 
$E^\alpha = e^\alpha + e^{-\alpha}$ and $F^\alpha = e^\alpha - e^{-\alpha}$. 
As automorphisms of $V_{\Z\alpha}$, we have
\begin{equation}\label{conj101}
\sigma\theta_{\Z\alpha}\sigma=\inn_{\frac{\alpha}{4}}. 
\end{equation}

\section{Orbifold model $M^\tau$}\label{Sect4}
In this section we introduce an orbifold model $M^\tau$. 
We use the notation $X_N$ to denote the root lattice of type $X_N$. 
We also write $X_N^i$ 
%\begin{equation*}
%X_N^i=\overbrace{X_N\oplus\cdots \oplus X_N}^{i}
%\end{equation*}
for an orthogonal sum of $i$ copies of the root lattice $X_N$. 

For simplicity, we write $\affine{k}{0}$ for the simple affine vertex operator algebra 
$\mathcal{L}_{\widehat{\splin}_2}(k,0)$ associated to ${\splin}_2(\C)$ of positive integer level $k$. 
It is well-known that $\affine{1}{0}$ is isomorphic to the lattice type vertex operator algebra 
$V_{A_1}$ associated to the root lattice of type $A_1$. 
Thus we have natural isomorphisms 
\begin{equation}\label{isom101}
\affine{1}{0}^{\otimes 4}\cong V_{A_1}^{\otimes 4}\cong V_{A_1^4}
\end{equation}
of vertex operator algebras. 
Let $\tau$ be a cyclic permutation on $\affine{1}{0}^{\otimes 4}$ defined by 
\begin{equation*}
\tau(a_1\otimes a_2\otimes a_3\otimes a_4)=a_4\otimes a_1\otimes a_2\otimes a_3
\end{equation*}
for $a_i\in \affine{1}{0}$. 
%Under the isomorphisms \eqref{isom101}, $\tau$ can be thought to be a lift of an isometry  
In fact, $\tau$ is a lift of an isometry
\begin{equation}\label{eq:tau-A_1^4}
\tau:\alpha_i \mapsto \alpha_{i+1},\quad i\in \Z/4\Z
\end{equation} 
of the lattice 
\begin{equation}\label{eq:A_1^4}
A_1^4=\Z\alpha_1\oplus \Z\alpha_2\oplus \Z\alpha_3\oplus \Z\alpha_4
\end{equation}
with $\langle\alpha_i,\alpha_j\rangle=2\delta_{i,j}$.

Set  
\begin{equation*}
H=\alpha_1+\alpha_2+\alpha_3+\alpha_4, 
\end{equation*}
and consider a sublattice $\sqrt{2} A_3=\sum_{i,j=1}^{4}\Z(\alpha_i-\alpha_j)$ of $A_1^4$.
We note that $\Z H$ and $\sqrt{2} A_3$ are mutually orthogonal and that 
$H\equiv 4\alpha_1$ modulo $\sqrt{2} A_3$. 
Hence $A_1^4 = \cup_{i=0}^3 (i \alpha_1+\Z H+\sqrt{2} A_3)$ and 
\begin{equation*}
V_{A_1^4} = \oplus_{i=0}^3 V_{i \alpha_1+\Z H+\sqrt{2} A_3}
\end{equation*}
as $V_{\Z H+\sqrt{2} A_3}$-modules. It follows that 
\begin{equation*}
{\Com}_{V_{A_1^4}}(V_{\Z H})=V_{\sqrt{2}A_3}. 
\end{equation*}

Since $\tau$ leaves $V_{\Z H}$ invariant, 
$\tau$ induces an automorphism of the vertex operator algebra $V_{\sqrt{2}A_3}$, 
which will be also denoted by $\tau$.
This automorphism is a lift of the restriction of the isometry $\tau$ \eqref{eq:tau-A_1^4} 
of $A_1^4$ to its sublattice $\sqrt{2}A_3$: 
\begin{equation}\label{eq:tau-sqrt2-A3}
\tau(\alpha_i-\alpha_{i+1})=\alpha_{i+1}-\alpha_{i+2},\quad i\in\Z/4\Z.
\end{equation} 
Since $V_{\Z H}$ is contained in $(V_{A_1^4})^{\tau}$, 
it follows from \eqref{orbcoset} that
\begin{equation*}
(V_{\sqrt{2}A_3})^{\tau} = {\Com}_{(V_{A_1^4})^{\tau}}(V_{\Z H})
\end{equation*}

We next take two vectors 
\begin{equation*}
E=e^{\alpha_1}+e^{\alpha_2}+e^{\alpha_3}+e^{\alpha_4},\quad 
F=e^{-\alpha_1}+e^{-\alpha_2}+e^{-\alpha_3}+e^{-\alpha_4}
\end{equation*}
in $\ (V_{A_1^4})_1^\tau$. 
Then the set $\{E,H,F\}$ generates a vertex operator subalgebra $U$ of $(V_{A_1^4})^\tau$ 
isomorphic to $\affine{4}{0}$. 
We consider the commutant 
\begin{equation*}
M={\Com}_{V_{A_1^4}}(U)\cong {\Com}_{\affine{1}{0}^{\otimes 4}}(\affine{4}{0}). 
\end{equation*}

We note that $U$ contains $V_{\Z H}$ as a vertex operator subalgebra.
The commutant 
\begin{equation*}
K_0={\Com}_{U}(V_{\Z H})
\end{equation*}
has been studied in \cite{ArakawaLamYamada}, \cite{DongLamYamada09} 
and \cite{DongLamWangYamada10}. 
The Virasoro vector of $K_0$ is $\omega^{K_0}=\omega^U-\omega^{V_{\Z H}}$.
Since $\omega^{K_0}$ and $\omega^{V_{\Z H}}$ are mutually commutative conformal vectors 
and since $\Com_{V_{A_1^4}}(\w^{V_{\Z H}}) = V_{\sqrt{2} A_3}$, we see that
\begin{equation}\label{eqn1020}
M = {\Com}_{V_{A_1^4}}(\w^U) = {\Com}_{V_{\sqrt{2} A_3}}(\w^{K_0})
\end{equation}
%\begin{equation}
%\begin{split}\label{eqn1020}
%M&={\Com}_{V_{A_1^4}}(\w^U)\\
%&={\Com}_{\Com_{V_{A_1^4}}(\w^{V_{\Z H}})}(\w^{K_0})\\
%&={\Com}_{V_{\sqrt{2} A_3}}(K_0)
%\end{split}
%\end{equation}
by Proposition \ref{prop102}. 
Since $U$ is contained in $(V_{A_1^4})^{\tau}$, we have 
\begin{equation}\label{eqn1021}
M^{\tau}={\Com}_{(V_{\sqrt{2} A_3})^{\tau}}(\w^{K_0})=\left(\Com_{V_{\sqrt{2} A_3}}(\w^{K_0})\right)^{\tau}. 
\end{equation}

%\section{An isomorphism between $M^\tau$ and a certain orbifold model}
\section{Main results}
In this section we show that $M^{\tau}$ is isomorphic to a $\Z_2\times\Z_2$-orbifold model 
of a tensor product of two lattice type vertex operator algebras of rank one. 
Following \cite{DongLamYamada01}, we study an isomorphism between 
the vertex operator algebras $V_{\sqrt{2}A_3}$ and $V_{A_1^3}^+$. 
We remark that another isomorphism was considered in \cite{DongLamYamada99} 
(see \cite[Remark 3.2]{DongLamYamada01}). 
%It is known that this isomorphism can be generalized to an isomorphisms 
%between $V_{\sqrt{2}Q(D_n)}$ and $V_{Q^n(A_1)}^+$ for $n$ (see \cite{DongLamYamada01}). 

Throughout this section, let 
\begin{equation*}
L = \Z\alpha \oplus \Z\alpha \oplus \Z\alpha
\end{equation*}
be an orthogonal sum of 
three copies of $\Z\alpha$, where $\langle \alpha, \alpha \rangle = 2$. 
Set
\begin{equation*}
\alpha^{(1)} = (\alpha,0,0), \quad \alpha^{(2)} = (0,\alpha,0), \quad \alpha^{(3)} = (0,0,\alpha),
\end{equation*}
so that $L = \Z\alpha^{(1)} + \Z\alpha^{(2)} + \Z\alpha^{(3)}$ and 
$\langle \alpha^{(i)}, \alpha^{(j)} \rangle = 2\delta_{i,j}$. 
The vertex operator algebra $V_L$ is isomorphic to 
the tensor product $V_{A_1}^{\otimes 3}$ of three copies of $V_{A_1} = V_{\Z\alpha}$. 
Hence the involution $\sigma$ of $V_{\Z\alpha}$ defined in \eqref{sigmainvolution} 
induces naturally an involution $\sigma\otimes \sigma\otimes\sigma$ of $V_L$. 
Let 
\begin{equation}\label{eq:rho}
\rho=\inn_{\frac{1}{4}(\alpha^{(2)}+\alpha^{(3)})}(\sigma\otimes \sigma\otimes\sigma) 
\in {\rm Aut}(V_L)
\end{equation}
be a composite of $\sigma\otimes \sigma\otimes\sigma$ and the inner automorphism 
$\inn_{\frac{1}{4}(\alpha^{(2)}+\alpha^{(3)})}$ of $V_L$ with respect to 
$\frac{1}{4}(\alpha^{(2)}+\alpha^{(3)}) \in \frac{1}{4}L$ 
(see \cite[Section 3]{DongLamYamada01}).  
We note that $\inn_{\frac{1}{4}(\alpha^{(2)}+\alpha^{(3)})} 
= 1 \otimes \inn_{\frac{1}{4}\alpha} \otimes \inn_{\frac{1}{4}\alpha}$. 
By the definition of $\rho$, we have
\begin{alignat}{4}
\rho(\alpha^{(1)}) &= E^{\alpha^{(1)}}, & \quad \rho(\alpha^{(i)}) &= -E^{\alpha^{(i)}} 
&\quad & (i=2,3),\label{eqn001}\\
\rho(E^{\alpha^{(i)}}) &= \alpha_i & & &\quad&(i=1,2,3),\\
\rho(F^{\alpha^{(1)}}) &= -F^{\alpha^{(1)}}, &\quad \rho(F^{\alpha^{(i)}}) &= F^{\alpha_i} 
&\quad & (i=2,3)\label{eqn002}. 
\end{alignat}

Set 
\begin{gather*}
\beta_1=\alpha^{(1)}+\alpha^{(2)},\quad \beta_2=-\alpha^{(2)}+\alpha^{(3)},
\quad \beta_3=-\alpha^{(1)}+\alpha^{(2)},\\
\gamma=\alpha^{(1)}+\alpha^{(2)} +\alpha^{(3)}.
\end{gather*} 

Then $\{\frac{1}{\sqrt{2}}\beta_1,\frac{1}{\sqrt{2}}\beta_2,\frac{1}{\sqrt{2}}\beta_3\}$ 
forms the set of simple roots of type $A_3$. 
We consider the sublattice  
\begin{equation*}
N=\sum_{i,j=1}^3\Z(\alpha^{(i)}\pm\alpha^{(j)}) =\langle \beta_1,\beta_2,\beta_3\rangle_{\Z}
\end{equation*} 
of $L$. 
It is known that $N \cong \sqrt{2}D_3$ is isomorphic to the sublattice $\sqrt{2}A_3$ 
of the lattice $A_1^4$ \eqref{eq:A_1^4} discussed in Section 4 
by the correspondence 
\begin{equation}\label{equiv101}
\beta_1\leftrightarrow \alpha_1-\alpha_2,\quad \beta_2\leftrightarrow\alpha_2-\alpha_3,
\quad \beta_3\leftrightarrow \alpha_3-\alpha_4.
\end{equation}

This isomorphism between the lattices $N$ and $\sqrt{2}A_3$ induces an isomorphism 
between the vertex operator algebras $V_N$ and $V_{\sqrt{2}A_3}$. 
Thus we can think of the vertex operator subalgebras $M$ and $K_0$ of 
$V_{\sqrt{2}A_3}$ discussed in Section 4 as vertex operator subalgebras of $V_N$. 

We need the following facts in \cite{DongLamYamada01}. 

\begin{theorem}\label{theorem001}
$(1)$ $\rho(V_N)=V_L^+$.

$(2)$ $\rho(\w^{K_0})=\w^{V_{\Z \gamma}}$.

$(3)$ $\rho(M)=\Com_{V_L^+}(\w^{V_{\Z \gamma}})$.  
\end{theorem}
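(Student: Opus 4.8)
The plan is to deduce $(3)$ formally from $(1)$ and $(2)$, to prove $(1)$ by realizing $\rho(V_N)$ as a fixed‑point subalgebra and identifying it with $V_L^+$, and to prove $(2)$ by an explicit computation of $\w^{K_0}$. For $(3)$: by \eqref{eqn1020} and the lattice isomorphism \eqref{equiv101} we have $M=\Com_{V_N}(\w^{K_0})$; since $\rho$ is an automorphism of $V_L$ and the commutant of a conformal vector $e$ is $\{v\mid e_{(0)}v=0\}$ by \eqref{conf101}, we get $\rho(M)=\Com_{\rho(V_N)}(\rho(\w^{K_0}))$, which by $(1)$ and $(2)$ equals $\Com_{V_L^+}(\w^{V_{\Z\gamma}})$. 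So only $(1)$ and $(2)$ require work.

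For $(1)$, I would first rewrite $\rho$. By \eqref{conj101} one has $\inn_{\alpha/4}\sigma=\sigma\theta_{\Z\alpha}$ on $V_{\Z\alpha}$, and since $\inn_{\frac14(\alpha^{(2)}+\alpha^{(3)})}=1\otimes\inn_{\frac14\alpha}\otimes\inn_{\frac14\alpha}$, this gives
\[
\rho=(\sigma\otimes\sigma\otimes\sigma)\circ\iota,\qquad \iota:=1\otimes\theta_{\Z\alpha^{(2)}}\otimes\theta_{\Z\alpha^{(3)}},
\]
where $\iota$ is the lift of the isometry $(a,b,c)\mapsto(a,-b,-c)$ of $L$. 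That isometry preserves $N$ (check it on $\beta_1,\beta_2,\beta_3$), so $\iota(V_N)=V_N$ and $\rho(V_N)=(\sigma\otimes\sigma\otimes\sigma)(V_N)$. Since $\sigma\otimes\sigma\otimes\sigma$ conjugates $\theta_L=\theta_{\Z\alpha}^{\otimes 3}$ to $(\sigma\theta_{\Z\alpha}\sigma)^{\otimes 3}=\inn_{\alpha/4}^{\otimes 3}=\inn_{\gamma/4}$, the desired identity $\rho(V_N)=V_L^+=(V_L)^{\theta_L}$ is equivalent to $V_N=(V_L)^{\inn_{\gamma/4}}$. The latter is immediate: $\inn_{\gamma/4}$ fixes $\h$ pointwise and multiplies $e^{\beta}$ by $(-1)^{\langle\gamma,\beta\rangle/2}$, while $\langle\gamma,m_1\alpha^{(1)}+m_2\alpha^{(2)}+m_3\alpha^{(3)}\rangle=2(m_1+m_2+m_3)$ lies in $4\Z$ precisely when $m_1\alpha^{(1)}+m_2\alpha^{(2)}+m_3\alpha^{(3)}\in N$.

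For $(2)$, I would compute $\w^{K_0}=\w^U-\w^{V_{\Z H}}$ explicitly inside $V_{\sqrt2 A_3}$. With $\w^U=\frac1{12}\bigl(\frac12 H_{(-1)}H+E_{(-1)}F+F_{(-1)}E\bigr)$ the level‑$4$ Sugawara vector of $U$ and $\w^{V_{\Z H}}=\frac1{16}H_{(-1)}H$, one finds $E_{(-1)}F+F_{(-1)}E=\sum_{i=1}^4\alpha_{i(-1)}\alpha_i+2\sum_{\delta}e^{\delta}$, the last sum over the twelve norm‑$4$ vectors $\delta$ of $\sqrt2 A_3$; writing $\alpha_i=\frac14 H+\tilde\alpha_i$ with $\tilde\alpha_i\in H^{\perp}$ and $\sum_i\tilde\alpha_i=0$, the components along $H$ cancel and $\w^{K_0}=\frac13\w^{V_{\sqrt2 A_3}}+\frac16\sum_{\delta}e^{\delta}$. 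Transporting to $V_N$ via \eqref{equiv101}, which carries the twelve short roots $\alpha_i-\alpha_j$ bijectively to the twelve vectors $\pm\alpha^{(k)}\pm\alpha^{(l)}$ ($k<l$) of $N$ and carries $\w^{V_{\sqrt2 A_3}}$ to $\w^{V_L}$, and then applying $\rho$: the Virasoro part is fixed, and from \eqref{eqn001}--\eqref{eqn002} a short check shows that for each pair $\{k,l\}$ the four summands $\rho(e^{s\alpha^{(k)}})_{(-1)}\rho(e^{t\alpha^{(l)}})$ with $s,t\in\{\pm1\}$ add up to $\alpha^{(k)}_{(-1)}\alpha^{(l)}$ (the terms involving $F^{\alpha^{(k)}}$ or $F^{\alpha^{(l)}}$ cancelling by sign). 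Hence $\rho(\w^{K_0})=\frac13\w^{V_L}+\frac16\sum_{k<l}\alpha^{(k)}_{(-1)}\alpha^{(l)}$, and this coincides with $\frac1{12}\gamma_{(-1)}\gamma=\w^{V_{\Z\gamma}}$ because $\gamma_{(-1)}\gamma=\sum_i\alpha^{(i)}_{(-1)}\alpha^{(i)}+2\sum_{k<l}\alpha^{(k)}_{(-1)}\alpha^{(l)}$ and $\sum_i\alpha^{(i)}_{(-1)}\alpha^{(i)}=4\w^{V_L}$.

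The part I expect to be most delicate is the bookkeeping in $(2)$: getting the normalisation constants exactly right (the Sugawara factor $\frac1{12}$, the coefficient $\frac1{16}$ of $\w^{V_{\Z H}}$, and the factor $2$ multiplying $\sum_{\delta}e^{\delta}$ in $E_{(-1)}F+F_{(-1)}E$), and tracking the identification \eqref{equiv101} of the short roots carefully enough that the two twelve‑term sums appearing in $\rho(\w^{K_0})=\w^{V_{\Z\gamma}}$ match on the nose. Parts $(1)$ and $(3)$ are, by contrast, essentially formal.
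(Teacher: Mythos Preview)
Your argument is correct in all three parts; the deduction of $(3)$ from $(1)$, $(2)$ and \eqref{eqn1020} is exactly what the paper does. The difference lies in $(1)$ and $(2)$: the paper does not prove these here but simply invokes \cite[Lemma~3.4(3) and Lemma~3.3(3)]{DongLamYamada01}, whereas you supply direct, self-contained proofs. Your route for $(1)$---rewriting $\rho=(\sigma\otimes\sigma\otimes\sigma)\circ(1\otimes\theta_{\Z\alpha}\otimes\theta_{\Z\alpha})$, observing that the second factor preserves $V_N$, and then using \eqref{conj101} to reduce the claim to $V_N=(V_L)^{\inn_{\gamma/4}}$---is clean and perhaps more transparent than quoting the external lemma. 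Your computation for $(2)$ is also correct; the constants all check (the Sugawara factor $\tfrac{1}{12}$, the $\tfrac{1}{16}$ for $\w^{V_{\Z H}}$, and the $2$ in front of $\sum_{\delta}e^{\delta}$), and the final identity $\rho(\w^{K_0})=\w^{V_{\Z\gamma}}$ falls out exactly as you describe.

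One small simplification you might make in $(2)$: rather than tracking the four terms $\rho(e^{s\alpha^{(k)}})_{(-1)}\rho(e^{t\alpha^{(l)}})$ individually and arguing that the $F^{\alpha^{(\cdot)}}$-contributions cancel, observe first that
\[
\sum_{s,t\in\{\pm1\}} e^{s\alpha^{(k)}+t\alpha^{(l)}}
=\Big(\sum_{s}e^{s\alpha^{(k)}}\Big)_{(-1)}\Big(\sum_{t}e^{t\alpha^{(l)}}\Big)
=E^{\alpha^{(k)}}_{(-1)}E^{\alpha^{(l)}},
\]
so that $\rho$ sends this directly to $\alpha^{(k)}_{(-1)}\alpha^{(l)}$ by the relation $\rho(E^{\alpha^{(i)}})=\alpha^{(i)}$ from \eqref{eqn001}--\eqref{eqn002}. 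This bypasses the sign bookkeeping entirely.
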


\begin{proof}
The assertions (1) and (2) follow from \cite[Lemmas 3.4 (3) and 3.3 (3)]{DongLamYamada01}.
Then the assertion (3) follows from \eqref{eqn1020}. 
\end{proof}

Under the isomorphism \eqref{equiv101} between $N$ and ${\sqrt{2}A_3}$, 
the isometry $\tau$ \eqref{eq:tau-sqrt2-A3} of the lattice  ${\sqrt{2}A_3}$ corresponds to 
an isometry 
\begin{equation*}
\beta_1\mapsto \beta_2\mapsto \beta_3\mapsto -\alpha^{(2)}-\alpha^{(3)}\mapsto \beta_1
\end{equation*} 
of the lattice $N$. 
This isometry of $N$ is the restriction of an isometry $\widetilde{\tau}$ of the lattice $L$ 
of order $4$ given by 
\begin{equation*}
\widetilde{\tau} : \alpha^{(1)} \mapsto \alpha^{(3)} \mapsto -\alpha^{(1)} \mapsto 
-\alpha^{(3)} \mapsto \alpha^{(1)},\quad \alpha^{(2)} \leftrightarrow -\alpha^{(2)}.
\end{equation*}
The isometry $\widetilde{\tau}$ of $L$ lifts to an automorphism of 
the vertex operator algebra $V_L$ of order $4$, 
which is also denoted by $\widetilde{\tau}$. 

Actually, 
\begin{equation*}
\widetilde{\tau}=(\theta_{\Z\alpha}\otimes \theta_{\Z\alpha}\otimes 1)t_{13}
\end{equation*}
is a composite of $t_{13}$ and $\theta_{\Z\alpha}\otimes \theta_{\Z\alpha}\otimes 1$, 
where $t_{13}$ denotes the transposition of the first component and the third one 
of the tensor product $V_L=V_{A_1}^{\otimes 3}$. 
By direct calculations, we have 
\begin{alignat}{4}
\widetilde{\tau}(\alpha^{(1)}) &= \alpha^{(3)}, 
& \widetilde{\tau}(\alpha^{(2)}) &= -\alpha^{(2)}, 
& \widetilde{\tau}(\alpha^{(3)}) &= -\alpha^{(1)},\label{eqn101}\\
\widetilde{\tau}(e^{\pm \alpha^{(1)}}) &= e^{\pm \alpha^{(3)}}, 
&\quad \widetilde{\tau}(e^{\pm \alpha^{(2)}}) &= e^{\mp \alpha^{(2)}}, 
&\quad \widetilde{\tau}(e^{\pm \alpha^{(3)}}) &= e^{\mp \alpha^{(1)}}.\label{eqn102}
\end{alignat}

Now we consider the conjugate $\tau'$ of $ \widetilde{\tau}$ by $\rho$ \eqref{eq:rho}.
\begin{equation*}
\tau'=\rho \widetilde{\tau} \rho^{-1}\in \Aut(V_L). 
\end{equation*}
%Then we have the following lemma. 

\begin{lemma}\label{lemma002} 
As automorphisms of $V_L$, we have
\begin{equation}
\tau'=(1\otimes\inn_{\frac{1}{4}\alpha}\otimes\inn_{\frac{1}{4}\alpha}) t_{13}. 
\end{equation}
\end{lemma}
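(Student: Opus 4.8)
The plan is to verify the stated identity directly at the level of automorphisms of $V_L$, by conjugating the given factorizations of $\rho$ and $\widetilde{\tau}$. Set $\iota=\inn_{\frac14(\alpha^{(2)}+\alpha^{(3)})}=1\otimes\inn_{\frac14\alpha}\otimes\inn_{\frac14\alpha}$ and $s=\sigma\otimes\sigma\otimes\sigma$, so that $\rho=\iota s$ by \eqref{eq:rho}; since $\sigma$ is an involution we have $s^{-1}=s$, and hence $\rho^{-1}=s\iota^{-1}$. Using the factorization $\widetilde{\tau}=(\theta_{\Z\alpha}\otimes\theta_{\Z\alpha}\otimes 1)t_{13}$, the conjugate becomes $\tau'=\rho\widetilde{\tau}\rho^{-1}=\iota\,(s\widetilde{\tau}s)\,\iota^{-1}$, so it remains to evaluate $s\widetilde{\tau}s$ and then conjugate by $\iota$.

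First I would compute $s\widetilde{\tau}s$. Writing $\theta=\theta_{\Z\alpha}$ one has $s\widetilde{\tau}=(\sigma\theta\otimes\sigma\theta\otimes\sigma)t_{13}$. The transposition $t_{13}$ of the first and third tensor components of $V_L=V_{A_1}^{\otimes 3}$ satisfies the elementary identity $(A\otimes B\otimes C)t_{13}=t_{13}(C\otimes B\otimes A)$ for linear operators $A,B,C$ on $V_{A_1}$ (with ordinary group algebras there are no cocycle signs to track). Applying this with $A=B=\sigma\theta$, $C=\sigma$ and then multiplying on the right by $s=\sigma\otimes\sigma\otimes\sigma$ gives
\begin{equation*}
s\widetilde{\tau}s=t_{13}\,(\sigma^2\otimes\sigma\theta\sigma\otimes\sigma\theta\sigma)=t_{13}\,(1\otimes\inn_{\frac14\alpha}\otimes\inn_{\frac14\alpha}),
\end{equation*}
where the second equality uses $\sigma^2=\id$ and the relation $\sigma\theta_{\Z\alpha}\sigma=\inn_{\frac{\alpha}{4}}$ of \eqref{conj101}.

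Finally I would conjugate by $\iota$. Since the rightmost factor just obtained is exactly $\iota=1\otimes\inn_{\frac14\alpha}\otimes\inn_{\frac14\alpha}$, and inner automorphisms multiply according to $\inn_{h_1}\inn_{h_2}=\inn_{h_1+h_2}$ (so in particular $\inn_{-h}=\inn_h^{-1}$), we obtain
\begin{equation*}
\tau'=\iota\,t_{13}\,(1\otimes\inn_{\frac14\alpha}\otimes\inn_{\frac14\alpha})\,\iota^{-1}=\iota\,t_{13}=(1\otimes\inn_{\frac14\alpha}\otimes\inn_{\frac14\alpha})\,t_{13},
\end{equation*}
which is the assertion. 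The only delicate point is the bookkeeping of how $t_{13}$ interchanges operators sitting in the first and third tensor slots; once that identity is pinned down the rest is purely formal. Alternatively, one can verify the identity by evaluating both sides on the generating set $\{\alpha^{(i)},E^{\alpha^{(i)}},F^{\alpha^{(i)}}\mid i=1,2,3\}$ of $V_L$, using \eqref{eqn001}--\eqref{eqn002} and their inverses together with \eqref{eqn101}--\eqref{eqn102}; this is routine but more laborious.
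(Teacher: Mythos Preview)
Your proof is correct and follows essentially the same approach as the paper: a direct computation unwinding the factorizations $\rho=\iota s$ and $\widetilde{\tau}=(\theta\otimes\theta\otimes 1)t_{13}$, commuting tensor factors past $t_{13}$, and invoking the key relation $\sigma\theta_{\Z\alpha}\sigma=\inn_{\frac{\alpha}{4}}$ of \eqref{conj101}. The only cosmetic difference is that you first isolate $s\widetilde{\tau}s$ and then conjugate by $\iota$, whereas the paper carries out the whole string at once and moves the right-hand factors past $t_{13}$; the paper also tacitly uses that $\iota$ is an involution when writing $\rho^{-1}$, while you keep $\iota^{-1}$ explicit.
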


\begin{proof}
Using \eqref{conj101}, we can calculate as follows.  
\begin{align*}
\tau'&=\rho \widetilde{\tau} \rho^{-1}\\
&=\inn_{\frac{1}{4}(\alpha_2+\alpha_3)}\sigma\widetilde{\tau}\sigma\inn_{\frac{1}{4}(\alpha_2+\alpha_3)}\\
&=(1\otimes\inn_{\frac{1}{4}\alpha}\otimes \inn_{\frac{1}{4}\alpha})
(\sigma\otimes \sigma\otimes\sigma)(\theta_{\Z\alpha}\otimes \theta_{\Z\alpha}\otimes 1)t_{13}\\
&\quad\circ(\sigma\otimes \sigma\otimes\sigma)
(1\otimes\inn_{\frac{1}{4}\alpha}\otimes \inn_{\frac{1}{4}\alpha})\\
&=(\sigma\theta_{\Z \alpha}\otimes\inn_{\frac{1}{4}\alpha}\sigma\theta_{\Z \alpha}
\otimes \inn_{\frac{1}{4}\alpha}\sigma) t_{13} 
(\sigma\otimes\sigma \inn_{\frac{1}{4}\alpha}\otimes \sigma\inn_{\frac{1}{4}\alpha})\\
&=(\sigma\theta_{\Z \alpha}\sigma\inn_{\frac{1}{4}\alpha}
\otimes\inn_{\frac{1}{4}\alpha}\sigma\theta_{\Z \alpha}\sigma\inn_{\frac{1}{4}\alpha}
\otimes \inn_{\frac{1}{4}\alpha}) t_{13}\\
&=(1\otimes\inn_{\frac{1}{4}\alpha}\otimes \inn_{\frac{1}{4}\alpha}) t_{13}.
\end{align*}
\end{proof}

Recall that $\gamma=\alpha^{(1)}+\alpha^{(2)}+\alpha^{(3)}$. We set
\begin{equation*}
\gamma_1=\beta_2-\beta_3=\alpha^{(1)}-2\alpha^{(2)}+\alpha^{(3)},
\quad \gamma_2=-\beta_2-\beta_3=\alpha^{(1)}-\alpha^{(3)},
\end{equation*}
and consider $P=\Z\gamma_1+\Z\gamma_2+\Z\gamma$. 
Since $\gamma_1$, $\gamma_2 $ and $\gamma $ are mutually orthogonal, we have 
$V_P=V_{\Z\gamma_1}\otimes V_{\Z\gamma_2}\otimes V_{\Z\gamma}$. 
One can easily see that 
\begin{equation}\label{P001}
P=\langle \alpha^{(1)}-\alpha^{(3)},\alpha^{(2)}+2\alpha^{(3)}3,6\alpha^{(3)}\rangle_{\Z}.
\end{equation}
Hence we have a coset decomposition 
\begin{equation*}
L = \cup_{i=0}^5 (i\alpha^{(3)}+P).
\end{equation*}
Since $\alpha^{(3)}=\frac{1}{6}\gamma_1-\frac{1}{2}\gamma_2+\frac{1}{3}\gamma$, 
we have 
%$3\alpha^{(3)}+P=\frac{1}{2}\gamma_1+\frac{1}{2}\gamma_2+P$.
\begin{equation*}
3\alpha^{(3)}+P = \Big(\frac{1}{2}\gamma_1 + \Z\gamma_1\Big) 
+ \Big(\frac{1}{2}\gamma_2 + \Z\gamma_2\Big) + \Z\gamma.
\end{equation*}
Therefore,   
\begin{equation}\label{eq:Comm-VA_1^3-V_gamma}
\begin{split}
{\rm Com}_{V_L}(V_{\Z\gamma})
&= {\rm Com}_{V_{P}\oplus V_{3\alpha^{(3)}+P}}(V_{\Z\gamma})\\
&= \big( V_{\Z\gamma_1}\otimes V_{\Z\gamma_2} \big)
\oplus \big( V_{\frac{1}{2}\gamma_1+\Z\gamma_1}\otimes V_{\frac{1}{2}\gamma_2+\Z\gamma_2}\big). 
\end{split}
\end{equation}

\begin{lemma}\label{lemma003}
$(1)$ The eigenvalues for $\tau'$ on $V_{\Z\gamma_1}\otimes V_{\Z\gamma_2}$ are $\pm 1$.

$(2)$ The eigenvalues for $\tau'$ on 
$V_{\frac{1}{2}\gamma_1+\Z\gamma_1}\otimes V_{\frac{1}{2}\gamma_2+\Z\gamma_2}$ 
are $\pm\sqrt{-1}$.

$(3)$ $(V_{P}\oplus V_{3\alpha^{(3)}+P})^{\tau'}
=(V_{\Z\gamma_1}\otimes V_{\Z\gamma_2})^{\tau'}\otimes V_{\Z\gamma}$. 

\end{lemma}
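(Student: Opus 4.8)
The plan is to understand how the order‑four automorphism $\tau' = (1\otimes\inn_{\frac14\alpha}\otimes\inn_{\frac14\alpha})t_{13}$ of $V_L = V_{A_1}^{\otimes 3}$ acts on the orthogonal decomposition $V_P = V_{\Z\gamma_1}\otimes V_{\Z\gamma_2}\otimes V_{\Z\gamma}$, and then on the coset $V_{3\alpha^{(3)}+P}$. First I would record the action of $\tau'$ on the generators $\alpha^{(i)}$ and $e^{\pm\alpha^{(i)}}$ exactly as in \eqref{eqn101}--\eqref{eqn102} but now for the conjugated automorphism: from Lemma~\ref{lemma002}, $t_{13}$ swaps the first and third tensor slots and $\inn_{\frac14\alpha}$ sends $e^{m\alpha}\mapsto i^m e^{m\alpha}$ (since $\langle\frac14\alpha,\alpha\rangle = \frac12$). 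Thus $\tau'(\alpha^{(1)}) = \alpha^{(3)}$, $\tau'(\alpha^{(2)}) = \alpha^{(2)}$, $\tau'(\alpha^{(3)}) = \alpha^{(1)}$, and $\tau'(e^{\alpha^{(1)}}) = \sqrt{-1}\,e^{\alpha^{(3)}}$, $\tau'(e^{\alpha^{(2)}}) = \sqrt{-1}\,e^{\alpha^{(2)}}$, $\tau'(e^{\alpha^{(3)}}) = e^{\alpha^{(1)}}$ (and similarly with $-\alpha^{(i)}$, using $\inn_{\frac14\alpha}(e^{-\alpha}) = \sqrt{-1}\,e^{-\alpha}$ as well, since the eigenvalue is $(-1)^{-1}\cdot$ a phase; one must be careful with the precise fourth‑root phases here). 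From these one computes $\tau'$ on $\gamma$, $\gamma_1 = \alpha^{(1)}-2\alpha^{(2)}+\alpha^{(3)}$, and $\gamma_2 = \alpha^{(1)}-\alpha^{(3)}$: the isometry part fixes $\gamma$ and $\gamma_1$ and negates $\gamma_2$, so on the Heisenberg/group‑algebra level $\tau'$ preserves each factor $V_{\Z\gamma_1}$, $V_{\Z\gamma_2}$, $V_{\Z\gamma}$ setwise.

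For part (1): on $V_{\Z\gamma_1}$ the map $\tau'$ acts as a lift of the identity isometry composed with a phase on $e^{m\gamma_1}$; I compute that phase from the fourth‑root data above and the expansion $\gamma_1 = \alpha^{(1)}-2\alpha^{(2)}+\alpha^{(3)}$, which should give $\tau'(e^{m\gamma_1}) = (\pm1)^m e^{m\gamma_1}$ — an honest involution on $V_{\Z\gamma_1}$ — and likewise $\tau'$ is an involution on $V_{\Z\gamma_2}$ (here the isometry negates $\gamma_2$, so $\tau'$ restricted to $V_{\Z\gamma_2}$ is a $\theta$-type lift up to phase, still of order $2$). Since $\tau'$ is an order‑$2$ linear map on the finite‑dimensional weight spaces of $V_{\Z\gamma_1}\otimes V_{\Z\gamma_2}$, its eigenvalues are $\pm1$, proving (1). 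For (2): on $V_{\frac12\gamma_1+\Z\gamma_1}\otimes V_{\frac12\gamma_2+\Z\gamma_2}$, the computation of the phase picks up an extra factor coming from the half‑lattice shift; the key point is that $\tau'^2$ acts as $\inn$ by the element $\frac12(\alpha^{(2)}+\alpha^{(3)})$-type inner automorphism (or directly: $\tau'^2 = 1\otimes\inn_{\frac12\alpha}\otimes\inn_{\frac12\alpha}$, which sends $e^{\beta}\mapsto(-1)^{\langle\frac12\alpha^{(2)}+\frac12\alpha^{(3)},\beta\rangle}e^{\beta}$), and on the coset $V_{\frac12\gamma_1+\Z\gamma_1}\otimes V_{\frac12\gamma_2+\Z\gamma_2}$ the pairing of $\frac12\gamma_1+\frac12\gamma_2$ with $\frac12(\alpha^{(2)}+\alpha^{(3)})$ is a half‑integer, so $\tau'^2 = -1$ there; hence the eigenvalues of $\tau'$ are square roots of $-1$, i.e. $\pm\sqrt{-1}$.

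For part (3): from \eqref{eq:Comm-VA_1^3-V_gamma}, $\Com_{V_P\oplus V_{3\alpha^{(3)}+P}}(V_{\Z\gamma}) = (V_{\Z\gamma_1}\otimes V_{\Z\gamma_2})\oplus(V_{\frac12\gamma_1+\Z\gamma_1}\otimes V_{\frac12\gamma_2+\Z\gamma_2})$, but I want the $\tau'$‑fixed points of all of $V_P\oplus V_{3\alpha^{(3)}+P}$ (not just the commutant). Write $V_P\oplus V_{3\alpha^{(3)}+P} = \big(V_{\Z\gamma_1}\otimes V_{\Z\gamma_2}\oplus V_{\frac12\gamma_1+\Z\gamma_1}\otimes V_{\frac12\gamma_2+\Z\gamma_2}\big)\otimes V_{\Z\gamma}$ after regrouping, using $3\alpha^{(3)}+P = (\frac12\gamma_1+\Z\gamma_1)\oplus(\frac12\gamma_2+\Z\gamma_2)\oplus\Z\gamma$ exactly as displayed. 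Since $\tau'$ preserves each factor and acts trivially... no: $\tau'$ does not act trivially on $V_{\Z\gamma}$ a priori, so I must check $\tau'|_{V_{\Z\gamma}}$. I compute $\tau'(e^{m\gamma})$: the isometry fixes $\gamma$ and the phase from $\inn_{\frac14\alpha}^{\otimes 2}$ applied to the $\alpha^{(2)},\alpha^{(3)}$ components of $m\gamma = m\alpha^{(1)}+m\alpha^{(2)}+m\alpha^{(3)}$ contributes $\sqrt{-1}^{\,2m} = (-1)^m$, while the $t_{13}$ swap is harmless on $\gamma$ (symmetric in slots $1,3$) — so $\tau'(e^{m\gamma}) = (-1)^m e^{m\gamma}$, which is $\inn_{\gamma/4}$ on $V_{\Z\gamma}$, \emph{not} the identity. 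Hence part (3) as stated would be false unless I have the phases wrong; I expect the correct computation gives $\tau'$ acting on $V_{\Z\gamma}$ with eigenvalue $+1$ on the relevant part, or — more likely — the intended reading is that on the $\tau'$‑fixed subspace the $V_{\Z\gamma}$‑factor is automatically the whole of $V_{\Z\gamma}$ because $\tau'$ acts trivially there; reconciling this is \textbf{the main obstacle}, and it will come down to a careful bookkeeping of the $2$-cocycle/phase conventions in the lift $\widetilde\tau = (\theta_{\Z\alpha}\otimes\theta_{\Z\alpha}\otimes 1)t_{13}$ and its conjugate. Granting the clean statement $\tau'|_{V_{\Z\gamma}} = \id$, part (3) is immediate: $(X\otimes V_{\Z\gamma})^{\tau'} = X^{\tau'}\otimes V_{\Z\gamma}$ with $X = V_{\Z\gamma_1}\otimes V_{\Z\gamma_2}\oplus V_{\frac12\gamma_1+\Z\gamma_1}\otimes V_{\frac12\gamma_2+\Z\gamma_2}$, and by (2) the second summand contributes nothing to $X^{\tau'}$ (no eigenvalue $1$), leaving $X^{\tau'} = (V_{\Z\gamma_1}\otimes V_{\Z\gamma_2})^{\tau'}$, which is exactly the claimed identity.
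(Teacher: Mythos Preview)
Your overall strategy matches the paper's: compute $\tau'$ on the generators $\gamma_i$, $e^{\pm\gamma_i}$, $\gamma$, $e^{\pm\gamma}$ using Lemma~\ref{lemma002}, then deduce (1), (2), (3). The genuine gap is a phase miscalculation that creates your ``main obstacle''. You claim $\inn_{\frac14\alpha}(e^{m\alpha}) = \sqrt{-1}^{\,m}e^{m\alpha}$, but in fact
\[
\inn_{\frac14\alpha}(e^{m\alpha}) \;=\; e^{2\pi\sqrt{-1}\langle \frac14\alpha,\,m\alpha\rangle}\,e^{m\alpha}
\;=\; e^{2\pi\sqrt{-1}\cdot \frac{m}{2}}\,e^{m\alpha} \;=\; (-1)^{m}\,e^{m\alpha},
\]
exactly as recorded in Section~\ref{Sect3}. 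With the correct sign, the phase picked up by $e^{m\gamma}$ is $(-1)^m\cdot(-1)^m = 1$, so $\tau'(e^{\pm\gamma}) = e^{\pm\gamma}$ and $\tau'$ is the identity on $V_{\Z\gamma}$; your obstacle evaporates and (3) follows exactly as you outline. The paper records this as \eqref{eq202} and the companion formulas \eqref{eq201}, \eqref{eq2012}; once you redo the computation with $(-1)^m$ in place of $\sqrt{-1}^{\,m}$ you recover them verbatim.

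Two smaller points. First, your formula $\tau'^2 = 1\otimes\inn_{\frac12\alpha}\otimes\inn_{\frac12\alpha}$ is incorrect: conjugating $1\otimes\inn_{\frac14\alpha}\otimes\inn_{\frac14\alpha}$ by $t_{13}$ gives $\inn_{\frac14\alpha}\otimes\inn_{\frac14\alpha}\otimes 1$, so $\tau'^2 = \inn_{\frac14\alpha}\otimes\inn_{\frac12\alpha}\otimes\inn_{\frac14\alpha} = \inn_{\frac14(\alpha^{(1)}+2\alpha^{(2)}+\alpha^{(3)})}$. Your conclusion that $\tau'^2 = -1$ on the half--shift coset is nonetheless correct with this corrected element (pair it with $\frac12(\gamma_1\pm\gamma_2)$), and this is a perfectly good alternative to the paper's route for (2), which instead exhibits an explicit $\sqrt{-1}$--eigenvector $u = e^{\frac12(\gamma_1+\gamma_2)}+\sqrt{-1}\,e^{\frac12(\gamma_1-\gamma_2)}$ and uses irreducibility together with (1). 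Second, for (1) the paper simply observes that $V_{\Z\gamma_1}\otimes V_{\Z\gamma_2}$ is generated by the finite set $\{\gamma_1,\gamma_2,e^{\pm\gamma_1},E^{\gamma_2},F^{\gamma_2}\}$, each of which is a $\pm1$--eigenvector for $\tau'$; this is cleaner than arguing via ``order~$2$ on weight spaces''.
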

\begin{proof}
Recall that we regard $L$ as a subset of $V_L$. 
Under the canonical identification between $V_L$ and $V_{A_1}^{\otimes 3}$, we have 
\begin{align*}
\gamma_1&=\alpha\otimes \1\otimes\1-2(\1\otimes \alpha\otimes\1)+\1\otimes\1\otimes \alpha,\\
\gamma_2&=\alpha \otimes \1\otimes\1-\1\otimes\1\otimes \alpha ,\\
\gamma&=\alpha \otimes \1\otimes\1+\1\otimes \alpha \otimes\1+\1\otimes\1\otimes \alpha ,\\
e^{\pm\gamma_1}&=e^{\pm\alpha }\otimes e^{\mp 2\alpha }\otimes e^{\pm\alpha },\\
e^{\pm\gamma_2}&=e^{\pm\alpha }\otimes \1 \otimes e^{\mp\alpha },\\ 
e^{\pm\gamma}&=e^{\pm\alpha }\otimes e^{\pm\alpha}\otimes e^{\pm\alpha  },
\end{align*}
respectively. 
Then by Lemma \ref{lemma002}, we have 
\begin{alignat}{4}
\tau'(\gamma_1)&=\gamma_1,&\quad \tau'(e^{\pm\gamma_1})&=-e^{\pm\gamma_1}, \label{eq201}\\
\tau'(\gamma_2)&=-\gamma_2,&\quad \tau'(e^{\pm\gamma_2})&=-e^{\mp\gamma_2}, \label{eq2012}\\
\tau'(\gamma)&=\gamma,&\quad \tau'(e^{\pm\gamma})&=e^{\pm\gamma}.\label{eq202} 
\end{alignat}
Since $V_{\Z\gamma_1}\otimes V_{\Z\gamma_2}$ is generated by the set 
$\{\gamma_1,\gamma_2,e^{\pm\gamma_1},E^{\gamma_2},F^{\gamma_2}\}$ consisting of 
eigenvectors for $\tau'$ whose eigenvalues are $\pm1$, we have the assertion (1). 

We note that $V_{\frac{1}{2}\gamma_1+\Z\gamma_1}\otimes V_{\frac{1}{2}\gamma_2+\Z\gamma_2}$ 
is an irreducible $V_{\Z\gamma_1}\otimes V_{\Z\gamma_2}$-module and so it is generated by a nonzero vector 
$u=e^{\frac{1}{2}(\gamma_1+\gamma_2)}+\sqrt{-1}e^{\frac{1}{2}(\gamma_1-\gamma_2)}$. 
Since 
\begin{align*}
\tau'(e^{\frac{1}{2}(\gamma_1+\gamma_2)})
&=\tau'(e^{\alpha^{(1)}-\alpha^{(2)}})=e^{-\alpha^{(2)}+\alpha^{(3)}}
=e^{\frac{1}{2}(\gamma_1-\gamma_2)},\\
\tau'(e^{\frac{1}{2}(\gamma_1-\gamma_2)})
&=\tau'(e^{-\alpha^{(2)}+\alpha^{(3)}})=-e^{\alpha^{(1)}-\alpha^{(2)}}
=-e^{\frac{1}{2}(\gamma_1+\gamma_2)}
\end{align*}
we have $\tau'(u)=\sqrt{-1}u$.
Hence by (1), the assertion (2) holds. 

The assertion (3) follows from (1), (2) and \eqref{eq202}.  
\end{proof}

Here we note that $\langle\gamma_1,\gamma_1\rangle=12$ and 
$\langle\gamma_2,\gamma_2\rangle=4$. 
Let 
\begin{equation*}
g = \inn_{\frac{1}{24}\gamma_1}\otimes(\inn_{\frac{1}{8}\gamma_2}\theta_{\Z\gamma_2}) 
\in \Aut(V_{\Z\gamma_1} \otimes V_{\Z\gamma_2}).
\end{equation*}
Then by \eqref{eq201} and \eqref{eq2012}, the restriction of $\tau'$ to 
$V_{\Z\gamma_1}\otimes V_{\Z\gamma_2}$ coincides with the automorphism $g$. 
Hence \eqref{eq:Comm-VA_1^3-V_gamma} and Lemma \ref{lemma003} imply 
the following proposition. 

\begin{proposition}\label{prop104}
Let $L$ and $\tau'$ be as above. 
Then
\begin{equation*}
\Com_{V_L}(V_{\Z\gamma})^{\tau'} = (V_{\Z\gamma_1}\otimes V_{\Z\gamma_2})^g.
\end{equation*}
\end{proposition}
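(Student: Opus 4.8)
The plan is to assemble the proposition from the ingredients already laid out in the section. First I would recall the coset decomposition \eqref{eq:Comm-VA_1^3-V_gamma}, which writes $\Com_{V_L}(V_{\Z\gamma})$ as the direct sum of $V_{\Z\gamma_1}\otimes V_{\Z\gamma_2}$ and $V_{\frac{1}{2}\gamma_1+\Z\gamma_1}\otimes V_{\frac{1}{2}\gamma_2+\Z\gamma_2}$. Since $\tau'$ preserves $V_{\Z\gamma}$ (indeed it fixes $\gamma$ and $e^{\pm\gamma}$ by \eqref{eq202}) and preserves $\w^{V_{\Z\gamma}}$, it restricts to an automorphism of $\Com_{V_L}(V_{\Z\gamma})$, and taking $\tau'$-fixed points respects the above direct sum decomposition.

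Next I would use Lemma \ref{lemma003}: part (2) says the eigenvalues of $\tau'$ on the second summand $V_{\frac{1}{2}\gamma_1+\Z\gamma_1}\otimes V_{\frac{1}{2}\gamma_2+\Z\gamma_2}$ are $\pm\sqrt{-1}$, so this summand contributes nothing to the $\tau'$-fixed points; hence $\Com_{V_L}(V_{\Z\gamma})^{\tau'} = (V_{\Z\gamma_1}\otimes V_{\Z\gamma_2})^{\tau'}$. It then remains to identify the action of $\tau'$ on $V_{\Z\gamma_1}\otimes V_{\Z\gamma_2}$ with the action of the automorphism $g$. This is exactly the observation recorded just before the proposition: from \eqref{eq201} we read off $\tau'(\gamma_1)=\gamma_1$ and $\tau'(e^{\pm\gamma_1})=-e^{\pm\gamma_1}$, which matches $\inn_{\frac{1}{24}\gamma_1}$ acting on the rank-one factor $V_{\Z\gamma_1}$ (using $\inn_{\gamma_1/2\langle\gamma_1,\gamma_1\rangle}(e^{m\gamma_1})=(-1)^m e^{m\gamma_1}$ with $\langle\gamma_1,\gamma_1\rangle=12$), and from \eqref{eq2012} we read off $\tau'(\gamma_2)=-\gamma_2$, $\tau'(e^{\pm\gamma_2})=-e^{\mp\gamma_2}$, which matches $\inn_{\frac{1}{8}\gamma_2}\theta_{\Z\gamma_2}$ acting on $V_{\Z\gamma_2}$ (using $\langle\gamma_2,\gamma_2\rangle=4$, so $\inn_{\gamma_2/8}$ contributes the sign $(-1)^m$ on $e^{m\gamma_2}$ and $\theta_{\Z\gamma_2}$ sends $e^{\pm\gamma_2}\mapsto e^{\mp\gamma_2}$). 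Since $V_{\Z\gamma_1}\otimes V_{\Z\gamma_2}$ is generated by $\gamma_1,\gamma_2,e^{\pm\gamma_1},e^{\pm\gamma_2}$, agreement on these generators forces $\tau'|_{V_{\Z\gamma_1}\otimes V_{\Z\gamma_2}} = g$, and therefore the two sets of fixed points coincide, giving $\Com_{V_L}(V_{\Z\gamma})^{\tau'}=(V_{\Z\gamma_1}\otimes V_{\Z\gamma_2})^{g}$.

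There is essentially no obstacle here: the proposition is a bookkeeping corollary of Lemma \ref{lemma003} and the explicit formula of Lemma \ref{lemma002}. The only point requiring a little care is that $\tau'$ does restrict to $\Com_{V_L}(V_{\Z\gamma})$ — but this is immediate from \eqref{conf101} once we know $\tau'(\w^{V_{\Z\gamma}})=\w^{V_{\Z\gamma}}$, which holds because $\tau'$ fixes $\gamma$ and hence $\gamma_{(-1)}\gamma_{(-1)}\1$. A secondary point is that the direct-sum decomposition in \eqref{eq:Comm-VA_1^3-V_gamma} is $\tau'$-stable, which again follows from \eqref{eq201}--\eqref{eq2012} since $\tau'$ maps each of $V_{\Z\gamma_1}\otimes V_{\Z\gamma_2}$ and $V_{\frac{1}{2}\gamma_1+\Z\gamma_1}\otimes V_{\frac{1}{2}\gamma_2+\Z\gamma_2}$ into itself (the lattice cosets $\Z\gamma_i$ and $\frac{1}{2}\gamma_i+\Z\gamma_i$ being preserved up to sign of the exponent). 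So the proof is just a matter of citing \eqref{eq:Comm-VA_1^3-V_gamma}, invoking Lemma \ref{lemma003}(2) to discard the second summand, and invoking the generator computation to identify $\tau'$ with $g$ on the first summand.
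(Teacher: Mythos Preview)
Your proposal is correct and follows exactly the paper's approach: the paper states that the proposition is implied by \eqref{eq:Comm-VA_1^3-V_gamma} and Lemma \ref{lemma003}, together with the observation (recorded just before the proposition) that the restriction of $\tau'$ to $V_{\Z\gamma_1}\otimes V_{\Z\gamma_2}$ coincides with $g$. Your write-up simply makes the stability and generator-comparison steps explicit, which is fine but not a departure from the paper's argument.
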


Let $M$ and $\tau$ be as in Section 4. Then we see that
\begin{align*}
\rho(M^\tau) 
&= \Com_{V_L^+}(V_{\Z \gamma})^{\tau'}\\
&= \Com_{V_L}(V_{\Z \gamma})^{\langle \tau', \theta' \rangle}\\
&= (V_{\Z\gamma_1}\otimes V_{\Z\gamma_2})^{\langle g, \theta' \rangle}
\end{align*}
by the definition of $\tau'$, Theorem \ref{theorem001} and Proposition \ref{prop104}, 
where
\begin{equation*}
\theta' = \theta_{\Z\gamma_1} \otimes \theta_{\Z\gamma_2} 
\in \Aut(V_{\Z\gamma_1} \otimes V_{\Z\gamma_2}).
\end{equation*}

Let $G = {\langle g, \theta' \rangle}$ be a subgroup of 
$\Aut(V_{\Z\gamma_1} \otimes V_{\Z\gamma_2})$ generated by $g$ and $\theta'$. 
We note that  
\begin{equation*}
G = \langle (\inn_{\frac{1}{24}\gamma_1}\theta_{\Z\gamma_1})\otimes \inn_{\frac{1}{8}\gamma_2}, 
\inn_{\frac{1}{24}\gamma_1} \otimes(\inn_{\frac{1}{8}\gamma_2}\theta_{\Z\gamma_2})\rangle 
\cong \Z_2 \times \Z_2.
\end{equation*}

\begin{theorem}\label{main101}
Let $M=\Com_{V_{A_1^4}}(\affine{4}{0})$ and $\tau$ a cyclic permutation of $V_{A_1^4}$ of length $4$ 
as in Section 4. Let $G$ be as above. 
Then $M^\tau$ is isomorphic to the orbifold model $(V_{\Z\gamma_1}\otimes V_{\Z\gamma_2})^{G}$.
\end{theorem}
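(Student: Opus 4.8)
The plan is to carry the whole configuration over to the rank-three lattice vertex operator algebra $V_L$ along the automorphism $\rho$, where by Theorem~\ref{theorem001} the algebra $M$ becomes $\Com_{V_L^+}(\w^{V_{\Z\gamma}})$ and the length-$4$ permutation $\tau$ becomes $\tau'=\rho\widetilde\tau\rho^{-1}$, and then to read off the orbifold from the explicit lattice decomposition \eqref{eq:Comm-VA_1^3-V_gamma} of that commutant together with the description of $\tau'$ supplied by Lemmas~\ref{lemma002} and~\ref{lemma003}. In outline I would establish the chain
\begin{equation*}
M^\tau\ \cong\ \rho(M^\tau)\ =\ \Com_{V_L^+}(V_{\Z\gamma})^{\tau'}\ =\ \Com_{V_L}(V_{\Z\gamma})^{\langle\tau',\,\overline{\theta_L}\rangle}\ =\ (V_{\Z\gamma_1}\otimes V_{\Z\gamma_2})^{G},
\end{equation*}
where $\overline{\theta_L}$ denotes the automorphism of $\Com_{V_L}(V_{\Z\gamma})$ induced by $\theta_L$.

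For the first isomorphism, I would use the lattice isomorphism \eqref{equiv101} to realize $M^\tau$ inside $V_N\cong V_{\sqrt2A_3}$: by \eqref{eqn1020}--\eqref{eqn1021} it is the fixed-point subalgebra of $\widetilde\tau$ in $M=\Com_{V_N}(\w^{K_0})$. Since $\rho\in\Aut(V_L)$ and, by Theorem~\ref{theorem001}(2), $\rho(\w^{K_0})=\w^{V_{\Z\gamma}}$ (so $\rho$ carries the conformal vector of $M$ to that of $\rho(M)$), the restriction of $\rho$ is a vertex operator algebra isomorphism from $M^\tau$ onto $\rho(M^\tau)$. The second equality is the definition of $\tau'$ together with Theorem~\ref{theorem001}(3): $\rho(M^\tau)=\rho(M)^{\rho\widetilde\tau\rho^{-1}}=\Com_{V_L^+}(\w^{V_{\Z\gamma}})^{\tau'}$; note $\tau'$ really acts here, since by \eqref{eq202} it fixes $\gamma$ and $e^{\pm\gamma}$, hence $V_{\Z\gamma}$ pointwise, and it preserves $V_L^+=\rho(V_N)$.

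For the third equality I would pass from $V_L^+$ to $V_L$. As $\theta_L(\gamma)=-\gamma$, the vector $\w^{V_{\Z\gamma}}$ is $\theta_L$-fixed, so \eqref{orbcoset} gives $\Com_{V_L^+}(\w^{V_{\Z\gamma}})=\Com_{V_L}(\w^{V_{\Z\gamma}})^{\overline{\theta_L}}$, whence the $\tau'$-fixed points of the left side are exactly the $\langle\tau',\overline{\theta_L}\rangle$-fixed points of $\Com_{V_L}(V_{\Z\gamma})$. Now substitute \eqref{eq:Comm-VA_1^3-V_gamma}, which decomposes $\Com_{V_L}(V_{\Z\gamma})$ as $(V_{\Z\gamma_1}\otimes V_{\Z\gamma_2})\oplus(V_{\frac12\gamma_1+\Z\gamma_1}\otimes V_{\frac12\gamma_2+\Z\gamma_2})$. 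On the first summand $\overline{\theta_L}$ restricts to $\theta'=\theta_{\Z\gamma_1}\otimes\theta_{\Z\gamma_2}$ (immediate from $\theta_L(\gamma_i)=-\gamma_i$, $\theta_L(e^\beta)=e^{-\beta}$) and $\tau'$ restricts to $g$ (the identification made just before Proposition~\ref{prop104}, using \eqref{eq201} and \eqref{eq2012}); on the second summand $\tau'$ has eigenvalues $\pm\sqrt{-1}$ by Lemma~\ref{lemma003}(2), so it has no nonzero fixed vector there. Hence the common fixed points lie entirely in $V_{\Z\gamma_1}\otimes V_{\Z\gamma_2}$, giving $(V_{\Z\gamma_1}\otimes V_{\Z\gamma_2})^{\langle g,\theta'\rangle}=(V_{\Z\gamma_1}\otimes V_{\Z\gamma_2})^G$ by the definition of $G$. (One should also record the elementary fact that $G\cong\Z_2\times\Z_2$: using $\langle\gamma_1,\gamma_1\rangle=12$ and $\langle\gamma_2,\gamma_2\rangle=4$, the automorphisms $\inn_{\frac1{24}\gamma_1}$, $\inn_{\frac18\gamma_2}$ and $\theta_{\Z\gamma_i}$ act by explicit signs on the $e^{m\gamma_i}$, and one checks that $a:=g\theta'$ equals $(\inn_{\frac1{24}\gamma_1}\theta_{\Z\gamma_1})\otimes\inn_{\frac18\gamma_2}$, that $a$ and $g$ are commuting involutions, and that $1,g,a,ag=\theta'$ are pairwise distinct; thus $G=\langle a,g\rangle\cong\Z_2\times\Z_2$ with $a$ and $g$ the two generators displayed in the text.)

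The step I expect to need the most care is the bookkeeping in the orbifold-within-orbifold passage: verifying that the lattice isomorphism \eqref{equiv101} is compatible with the chosen lifts $\tau$ on $V_{\sqrt2A_3}$ and $\widetilde\tau$ on $V_N$, that the automorphism induced by $\theta_L$ on the commutant really equals $\theta'$ on the surviving tensor-product summand (not merely on a generating set), and that Lemma~\ref{lemma003}(2) genuinely licenses discarding the half-lattice summand. None of this is deep once Theorem~\ref{theorem001}, Lemmas~\ref{lemma002} and~\ref{lemma003}, and Proposition~\ref{prop104} are available; the theorem amounts to their assembly plus the elementary identification of $G$.
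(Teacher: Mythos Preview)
Your proposal is correct and follows essentially the same route as the paper: the paper's argument is precisely the chain $\rho(M^\tau)=\Com_{V_L^+}(V_{\Z\gamma})^{\tau'}=\Com_{V_L}(V_{\Z\gamma})^{\langle\tau',\theta'\rangle}=(V_{\Z\gamma_1}\otimes V_{\Z\gamma_2})^{\langle g,\theta'\rangle}$, justified by Theorem~\ref{theorem001}, the definition of $\tau'$, and Proposition~\ref{prop104}, together with the observation that $G=\langle g,\theta'\rangle\cong\Z_2\times\Z_2$. Your write-up is more explicit about the bookkeeping (distinguishing $\overline{\theta_L}$ from its restriction $\theta'$ and invoking Lemma~\ref{lemma003}(2) to discard the half-lattice summand), but the underlying argument is identical.
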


Let
\begin{equation*}
f=\inn_{\frac{1}{48}\gamma_1}\otimes\inn_{\frac{1}{16}\gamma_2}
\in\Aut(V_{\Z\gamma_1}\otimes V_{\Z\gamma_2}).
\end{equation*}
By \eqref{conj102}, we have 
\begin{align*}
f^{-1}((\inn_{\frac{1}{24}\gamma_1}\theta_{\Z\gamma_1})\otimes\inn_{\frac{1}{8}\gamma_2})f
&=\theta_{\Z\gamma_1}\otimes\inn_{\frac{1}{8}\gamma_2},\\
f^{-1}(\inn_{\frac{1}{24}\gamma_1}\otimes(\inn_{\frac{1}{8}\gamma_2}\theta_{\Z\gamma_2}))f
&=\inn_{\frac{1}{24}\gamma_1}\otimes\theta_{\Z\gamma_2}. 
\end{align*}

Let $g_1 = \theta_{\Z\gamma_1}\otimes\inn_{\frac{1}{8}\gamma_2}$ and 
$g_2 = \inn_{\frac{1}{24}\gamma_1}\otimes \theta_{\Z\gamma_2}$. 
Then $\langle g_1,g_2 \rangle = f^{-1} G f \cong \Z_2 \times \Z_2$ and the following corollary holds.

\begin{corollary}\label{main102}
The vertex operator algebra $M^\tau$ is isomorphic to the orbifold model 
$(V_{\Z\gamma_1}\otimes V_{\Z\gamma_2})^{\langle g_1,g_2 \rangle}$.
\end{corollary}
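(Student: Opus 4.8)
The plan is to deduce this corollary directly from Theorem \ref{main101} by conjugating the group $G$ by a suitable inner automorphism of $V_{\Z\gamma_1}\otimes V_{\Z\gamma_2}$, which induces an isomorphism of the corresponding orbifold models. The key observation is that for any automorphism $\varphi$ of a vertex operator algebra $W$ and any subgroup $G\le\Aut(W)$, the map $\varphi$ restricts to an isomorphism $W^{G}\xrightarrow{\ \sim\ }W^{\varphi G\varphi^{-1}}$; indeed $v\in W^{G}$ if and only if $\varphi(v)\in W^{\varphi G\varphi^{-1}}$. Applying this with $W=V_{\Z\gamma_1}\otimes V_{\Z\gamma_2}$ and $\varphi=f=\inn_{\frac{1}{48}\gamma_1}\otimes\inn_{\frac{1}{16}\gamma_2}$, we get $(V_{\Z\gamma_1}\otimes V_{\Z\gamma_2})^{G}\cong (V_{\Z\gamma_1}\otimes V_{\Z\gamma_2})^{fGf^{-1}}$, and it remains to identify $fGf^{-1}$ with $\langle g_1,g_2\rangle$.

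For that identification I would use the conjugation formula \eqref{conj102}, namely $\inn_{-h/2}(\inn_h\theta_L)\inn_{h/2}=\theta_L$, which in the rank-one factors reads $\inn_{-h/2}\inn_h\theta_{\Z\gamma_i}\inn_{h/2}=\theta_{\Z\gamma_i}$ after noting $\theta_L\inn_h\theta_L=\inn_{-h}$ and $\inn_h\inn_{h'}=\inn_{h+h'}$. Concretely, taking $h=\tfrac{1}{24}\gamma_1$ in the first tensor factor gives $\inn_{-\frac{1}{48}\gamma_1}(\inn_{\frac{1}{24}\gamma_1}\theta_{\Z\gamma_1})\inn_{\frac{1}{48}\gamma_1}=\theta_{\Z\gamma_1}$, while $\inn_{\frac{1}{8}\gamma_2}$ commutes with $\inn_{\frac{1}{16}\gamma_2}$; taking $h=\tfrac{1}{8}\gamma_2$ in the second factor gives $\inn_{-\frac{1}{16}\gamma_2}(\inn_{\frac{1}{8}\gamma_2}\theta_{\Z\gamma_2})\inn_{\frac{1}{16}\gamma_2}=\theta_{\Z\gamma_2}$, while $\inn_{\frac{1}{24}\gamma_1}$ commutes with $\inn_{\frac{1}{48}\gamma_1}$. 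These are precisely the two displayed conjugation identities stated just before the corollary, so that $f^{-1}((\inn_{\frac{1}{24}\gamma_1}\theta_{\Z\gamma_1})\otimes\inn_{\frac{1}{8}\gamma_2})f=g_1$ and $f^{-1}(\inn_{\frac{1}{24}\gamma_1}\otimes(\inn_{\frac{1}{8}\gamma_2}\theta_{\Z\gamma_2}))f=g_2$; since $G=\langle(\inn_{\frac{1}{24}\gamma_1}\theta_{\Z\gamma_1})\otimes\inn_{\frac{1}{8}\gamma_2},\ \inn_{\frac{1}{24}\gamma_1}\otimes(\inn_{\frac{1}{8}\gamma_2}\theta_{\Z\gamma_2})\rangle$ by the remark preceding Theorem \ref{main101}, we get $f^{-1}Gf=\langle g_1,g_2\rangle$, equivalently $fGf^{-1}=f(f^{-1}Gf)f^{-1}$ — here one should just work with $\varphi=f^{-1}$ throughout to avoid the extra inversion, so that $(V_{\Z\gamma_1}\otimes V_{\Z\gamma_2})^{G}\cong (V_{\Z\gamma_1}\otimes V_{\Z\gamma_2})^{f^{-1}Gf}=(V_{\Z\gamma_1}\otimes V_{\Z\gamma_2})^{\langle g_1,g_2\rangle}$.

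Finally I would note that $\langle g_1,g_2\rangle\cong\Z_2\times\Z_2$: each $g_i$ is an involution because $\theta_{\Z\gamma_i}$ is, because $\inn_{\frac{1}{8}\gamma_2}$ and $\inn_{\frac{1}{24}\gamma_1}$ are involutions by the displayed formula $\inn_{\alpha/2\langle\alpha,\alpha\rangle}(e^{m\alpha})=(-1)^me^{m\alpha}$ applied to $\gamma_2$ (norm $4$, so $\tfrac{1}{8}\gamma_2=\gamma_2/2\langle\gamma_2,\gamma_2\rangle$) and to $\gamma_1$ (norm $12$, giving order two on $\C[\Z\gamma_1]$), and because $\theta_{\Z\gamma_i}$ commutes with inner automorphisms supported on the other tensor factor; and $g_1,g_2$ commute since they act on disjoint patterns of the two factors. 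Combining with Theorem \ref{main101}, which gives $M^\tau\cong(V_{\Z\gamma_1}\otimes V_{\Z\gamma_2})^{G}$, yields the corollary. I do not anticipate a real obstacle here: the content is entirely the bookkeeping of conjugation in $\Aut(V_{\Z\gamma_1}\otimes V_{\Z\gamma_2})$, and the only point requiring minor care is keeping track of whether one conjugates by $f$ or by $f^{-1}$, which is harmless since $f$ has order two in each factor up to the relevant subtlety and, more to the point, both $W^{G}\cong W^{fGf^{-1}}$ and $W^{G}\cong W^{f^{-1}Gf}$ hold.
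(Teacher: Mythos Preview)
Your proposal is correct and follows essentially the same route as the paper: the paper's argument, given in the paragraph immediately preceding the corollary, is precisely to conjugate $G$ by $f=\inn_{\frac{1}{48}\gamma_1}\otimes\inn_{\frac{1}{16}\gamma_2}$ using \eqref{conj102} to obtain $f^{-1}Gf=\langle g_1,g_2\rangle$, and then invoke Theorem \ref{main101}. Your only slip is the aside that ``$f$ has order two in each factor,'' which is false (each factor of $f$ has order $4$), but as you yourself note this is irrelevant since the conjugation isomorphism $W^G\cong W^{f^{-1}Gf}$ holds regardless.
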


\end{document}